\documentclass[11pt,a4paper,reqno]{amsart}
\usepackage[T1]{fontenc}
\usepackage{lmodern,microtype,mathtools,amssymb}
\usepackage[margin=30mm]{geometry}
\usepackage[hidelinks]{hyperref}
\hypersetup{pdftitle={An induction proof of strong NF for amenable group C*-algebras},pdfauthor={Mehdi Moradi}}
\numberwithin{equation}{section}
\newtheorem{theorem}{Theorem}[section]
\newtheorem{lemma}[theorem]{Lemma}
\newtheorem{proposition}[theorem]{Proposition}
\theoremstyle{remark}
\newtheorem{remark}[theorem]{Remark}
\newcommand{\C}{\mathbb C}
\newcommand{\B}{\mathcal B}
\newcommand{\K}{\mathcal K}
\newcommand{\FC}{\operatorname{FC}}
\newcommand{\FCH}{\operatorname{FCH}}
\newcommand{\Ind}{\operatorname{Ind}}
\newcommand{\Ad}{\operatorname{Ad}}
\newcommand{\dist}{\operatorname{dist}}
\newcommand{\ext}{\partial_{\mathrm e}}
\newcommand{\norm}[1]{\lVert#1\rVert}
\title[An induction proof of strong NF]{An induction proof of strong NF\ for amenable group $C^*$-algebras}
\author{Mehdi Moradi}
\address{University of Toronto, Toronto, Canada}
\email{mehdi.moradi@utoronto.ca}
\date{September 2026}
\keywords{Amenable group, hyper-FC radical, induced representation, strong NF, inner quasidiagonality}

\begin{document}
\begin{abstract}
We give a proof that the full group $C^*$-algebra of a countable discrete amenable group is strong NF. The representation-theoretic construction uses the exact sequence whose kernel is the hyper-FC radical and whose quotient is the maximal ICC quotient. Extreme invariant traces on the radical induce factorial representations of the group. A Choquet decomposition of the regular radical trace selects a separating family whose induced traces are supported on the ordinary FC-centre. Exact twisted crossed-product quotients and a sequential UCT argument make their images quasidiagonal. Faithful irreducible replacements then supply the projections required for inner quasidiagonality. We also give a direct trace-decomposition variant and identify the steps shared by the two proofs.
\end{abstract}
\maketitle

\section{Introduction and the ICC reduction}

\begin{theorem}\label{thm:main}
Let $G$ be a countable discrete amenable group. Then its full complex group $C^*$-algebra $C^*(G)$ is strong NF in the sense of Blackadar--Kirchberg.
\end{theorem}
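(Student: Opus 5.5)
By Blackadar--Kirchberg, a separable $C^*$-algebra is strong NF if and only if it is nuclear and inner quasidiagonal. Inner quasidiagonality in turn follows from having a separating family of irreducible representations $\pi$ such that $\pi(C^*(G))\cap\K(H_\pi)=0$ and $\pi(C^*(G))$ is quasidiagonal. Since $G$ is amenable, $C^*(G)=C^*_r(G)$ is nuclear, so the whole task is to produce such a family. I will work with the extension
\[
1\to N\to G\to G/N\to 1,\qquad N=\FCH(G),
\]
where $N$ is the hyper-FC radical and $G/N$ is the maximal ICC quotient. The plan is to build the family by inducing from $N$.

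\textbf{Step 1: factorial induced representations.} Take an extreme $G$-invariant trace $\tau$ on $C^*(N)$, and let $\Ind\tau$ be the trace on $C^*(G)$ obtained by extending $\tau$ by zero off $N$. Its GNS representation is factorial. To see this, note that an element of the centre of $\pi_{\Ind\tau}(G)''$ commutes with the twisted action of $G/N$. Because $G/N$ is ICC, a Fourier-coefficient argument in the twisted group von Neumann algebra $\pi_\tau(N)''\rtimes G/N$ forces the central element to live in $\pi_\tau(N)''$ and to be $G$-invariant there. Extremality of $\tau$ then makes it scalar.

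\textbf{Step 2: separation via the regular trace.} The regular trace $\delta_e$ on $N$ is $G$-invariant. I will Choquet-decompose it over the extreme invariant traces and check that the induced traces $\Ind\tau$, for $\tau$ in the support of the decomposing measure, jointly separate $C^*(G)$. This holds because the left regular representation of $G$ is weakly contained in their direct sum, and $C^*(G)=C^*_r(G)$. For almost every such $\tau$ I expect $\tau$ to vanish outside the FC-centre of $N$. The reason is that $\delta_e$ vanishes off $e$, conjugacy classes outside $\FC(N)$ are infinite, and invariance together with a mean-ergodic argument kills those coefficients almost surely. Consequently $\Ind\tau$ is supported on the FC-centre, so $\pi_{\Ind\tau}(C^*(G))$ is a quotient of a twisted crossed product of an abelian (type I, FC) piece by an amenable group.

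\textbf{Step 3: quasidiagonality of the images.} Each image $A_\tau=\pi_{\Ind\tau}(C^*(G))$ is an exact quotient of a twisted crossed product $C^*(\FC\text{-part})\rtimes_\omega G$. Such crossed products are nuclear and satisfy the UCT, by Tu's theorem on amenable groupoids applied to the twisted transformation groupoid. $A_\tau$ also carries a faithful trace, namely the one induced from $\tau$, which is faithful on the image since $\Ind\tau$ defines the representation. The Tikuisis--White--Winter theorem then makes $A_\tau$ quasidiagonal. The subtlety is that $A_\tau$ is a quotient, and the UCT does not pass to quotients automatically. I expect this to be the \textbf{main obstacle}. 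My first attempt is to write $A_\tau$ as a sequential inductive limit of UCT algebras, using an increasing exhaustion of $N$ by finitely generated FC-type subgroups, and to invoke closure of quasidiagonality under inductive limits. Failing that, I would verify the UCT directly through the sequential (Milnor $\varprojlim^1$) argument.

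\textbf{Step 4: inner quasidiagonality.} A factorial, quasidiagonal, faithfully traced $A_\tau$ has no nonzero compact elements in a suitable irreducible representation. Since $A_\tau$ is separable and prime, it has a faithful irreducible representation $\rho_\tau$. If $G/N$ is infinite, the trace rules out $\rho_\tau(A_\tau)\cap\K\ne0$, because an infinite-dimensional simple ideal would carry no trace. The case where $A_\tau$ is finite-dimensional is trivially fine. The family $\{\rho_\tau\circ\pi_{\Ind\tau}\}$ is then separating, irreducible, and has quasidiagonal images meeting the compacts only in finite-dimensional blocks. By Blackadar--Kirchberg's characterization this yields inner quasidiagonality, and together with nuclearity, $C^*(G)$ is strong NF.
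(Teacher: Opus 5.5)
Your outline has the same architecture as the paper: the hyper-FC radical, induced traces from extreme $G$-invariant traces with the ICC Fourier argument for factoriality, a Choquet decomposition of the regular trace, Tikuisis--White--Winter on the images, a faithful irreducible replacement, compact exclusion by the trace, and Voiculescu plus Blackadar--Kirchberg. The step that should produce the UCT for the images fails as written, for two reasons.

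First, the averaging must use $G$-conjugates, since the traces are only $G$-invariant. The conclusion you need is vanishing off $F=\FC(G)$, the elements with finite $G$-conjugacy class, not off $\FC(N)$. The $G$-normal closure of an element of $\FC(N)$ with infinite $G$-class need not be finitely generated or centre-by-finite. You should also index the family by the conull Borel set of extreme traces with this support, not by the closed support of the measure; separation then follows from the barycentre identity.

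Second, and more seriously, the FC-centre is not abelian, and its group $C^*$-algebra need not be type~I. For example, $G=\bigoplus_{\mathbb N}S_3$ is its own FC-centre and hyper-FC radical. It is not virtually abelian, so $C^*(G)$ is not type~I by Thoma's theorem, and it has extreme traces with image $M_{2^\infty}$. So in $A_\tau\cong(C^*(F)/J)\rtimes_{\alpha,w}(G/F)$ the coefficient is a general noncommutative quotient. (The acting group is $G/F$, not $G$, and this identification itself needs the kernel lemma below.) Tu's transformation-groupoid theorem gives nothing for this coefficient. With a genuinely abelian coefficient the quotient issue you flag would be harmless, since the image would again be a twisted transformation-groupoid algebra. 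It is the non-type~I coefficient that makes it real.

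The missing idea is the paper's construction of type~I stages, together with an identification of the image as their limit.
\begin{itemize}
\item The paper enumerates $F$ and lets $F_n$ be generated by the finitely many $G$-conjugates of $f_1,\dots,f_n$. This uses exactly that these classes are finite. It makes $F_n\lhd G$ a finitely generated FC-group, so $[F_n:Z(F_n)]<\infty$ and $C^*(F_n)$ is subhomogeneous.
\item A kernel lemma shows that for $L\lhd G$, the quotient of $A$ by the ideal $I_L$ generated by $J_L=K_t\cap C^*(L)$ is $(C^*(L)/J_L)\rtimes_{\alpha,w}(G/L)$. It also shows $K_t=I_L$ when $t$ vanishes off $L$, via faithfulness of $\bar t_L\circ E_D$.
\item Eckhardt's lemma on twisted crossed products of separable type~I algebras by countable amenable groups gives the UCT for each $A/I_{F_n}$.
\item The equalities $J=\overline{\bigcup_n J_n}$ and $I=\overline{\bigcup_n I_n}$ give $\pi_t(A)\cong\varinjlim A/I_{F_n}$.
\item The UCT class is closed under sequential limits with surjective connecting maps.
\end{itemize}
Your exhaustion ``of $N$ by finitely generated FC-type subgroups'' lacks normality in $G$, without which the stages are not twisted crossed products. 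It also lacks this type~I input.

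Your alternative of passing quasidiagonality to the limit can be made to work, but only after proving each stage quasidiagonal. That needs three things: the same stage UCT; a faithful trace on $A/I_{F_n}$, descended from $t\circ E_{F_n}$; and the fact that for separable nuclear algebras quasidiagonality coincides with MF, which does pass to limits with noninjective maps.

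Finally, the case split in Step~4 should be on $\dim\rho_\tau$, not on whether $G/N$ is infinite. Infinite-dimensional images occur with $G/N$ trivial, as in the $\bigoplus_{\mathbb N}S_3$ example, and there the faithful trace is exactly what excludes compacts.
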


\subsection*{Rosenberg's conjecture and stronger approximation properties}
In his 1987 appendix to Hadwin's paper, Rosenberg proved that a countable discrete group whose left regular representation is quasidiagonal is amenable \cite[Theorem~A1]{Ros87}. The corresponding implication for quasidiagonality of $C_r^*(G)$, and its converse, place group amenability in an operator-norm approximation framework. The converse became known as Rosenberg's conjecture. Progress included the amenable LEF case, recovered through the MF-group characterization of Carri\'on--Dadarlat--Eckhardt \cite[Theorem~2.8 and Proposition~2.13]{CDE13}, whose introduction credits Kerr with an earlier proof, and the elementary amenable case of Ozawa--R\o rdam--Sato \cite[Theorem~3.8(iv)]{ORS15}. Here LEF means locally embeddable into finite groups; the amenability hypothesis in the first result is essential.

Tikuisis--White--Winter resolved the conjecture in \cite[Corollary~C]{TWW17}. Their Theorem~A proves quasidiagonality of faithful traces on separable nuclear UCT algebras, and Corollary~B gives quasidiagonality of the algebra when such a trace exists. For amenable groups, the canonical trace is faithful and Tu's theorem supplies the UCT \cite[Lemma~3.5 and Proposition~10.7]{Tu99}. Thus every countable amenable group algebra is quasidiagonal. The present theorem strengthens this conclusion to inner quasidiagonality and hence strong NF. This requires finite-rank projections in the represented bicommutant, as constructed in Section~\ref{sec:replacements}.

Strong NF must also be distinguished from \emph{strong quasidiagonality}, which requires every representation to be quasidiagonal. The lamplighter group
\[
L=(\mathbb Z/2\mathbb Z)\wr\mathbb Z
 =\Bigl(\bigoplus_{\mathbb Z}\mathbb Z/2\mathbb Z\Bigr)\rtimes\mathbb Z
\]
illustrates the distinction. Carri\'on--Dadarlat--Eckhardt construct a nonfinite quotient of $C^*(L)$, and hence a non-quasidiagonal representation image \cite[Theorem~3.4 and Corollary~3.5]{CDE13}. Their construction uses an infinite projection. They also note that $C^*(L)$ is residually finite dimensional and inner quasidiagonal. Since $L$ is amenable, this algebra is strong NF. A separating family of quasidiagonal irreducible representations is compatible with other representation images failing even to be finite; our proof selects such a family and does not assert that all representations are quasidiagonal.

On the positive side, Eckhardt proved strong quasidiagonality for finitely generated solvable virtually nilpotent groups \cite[Theorem~1.9]{Eck14}; Eckhardt--Gillaspy--McKenney removed the solvability restriction and treated inductive limits of virtually nilpotent groups \cite[Corollary~4.6]{EGM19}. Eckhardt also exhibited a non-virtually-nilpotent group $\mathbb Z^3\rtimes\mathbb Z^2$ with strongly quasidiagonal group algebra, whereas for $\mathbb Z^d\rtimes\mathbb Z$ strong quasidiagonality is equivalent to virtual nilpotence \cite[Theorems~2.4 and~3.3]{Eck15}. These results describe the stronger all-representations property. His later work proves classification results for irreducible images of finitely generated virtually nilpotent groups \cite[Theorem~A]{Eck24}. For the present proof the relevant input from that work is Lemma~2.5: twisted crossed products of separable type~I algebras by countable amenable discrete groups satisfy the UCT. We apply it only to the specified type~I coefficient stages in Section~\ref{sec:uct}.

\subsection*{Why the UCT predicts strong NF, and why the quotients matter}
The UCT conjecture asks whether every separable nuclear $C^*$-algebra satisfies the universal coefficient theorem of Rosenberg--Schochet \cite{RS87}. It predicts the strong-NF conclusion here, not just the already established quasidiagonality of the group algebra. More precisely, it has the following conditional consequence.

\begin{remark}[The UCT prediction]\label{rem:uct-prediction}
If every separable nuclear $C^*$-algebra satisfies the UCT, then every separable unital nuclear $C^*$-algebra $D$ with a faithful tracial state is strong NF. To see this, represent a faithful trace as a Choquet integral of extreme traces on $D$. Their GNS representations separate $D$: an element annihilated by all of them has zero faithful trace on its square modulus. For each extreme trace $t$, the central-projection argument in Proposition~\ref{prop:direct} shows that $\pi_t(D)''$ is a factor; that argument uses only extremality of a trace. The image $D/\ker\pi_t$ is separable and nuclear and carries a faithful descended trace. Under the stated UCT assumption it is quasidiagonal. The factor-to-prime, faithful-irreducible-replacement and compact-exclusion arguments of Section~\ref{sec:replacements} therefore give a faithful quasidiagonal irreducible representation of each image. Their compositions with the quotient maps separate $D$. The finite norm-witness and bicommutant-projection construction in that section then proves inner quasidiagonality of $D$, and nuclearity gives strong NF. Every hypothesis of those shared arguments is present for these images.
\end{remark}

Our unconditional proof establishes the UCT for the particular trace quotients it needs. Although Tu's theorem already gives the UCT for $C^*(G)$, it does not supply the UCT for arbitrary representation images. For a tracial state $t$ on $A=C^*(G)$, the appropriate quotient is
\[
B_t=A/K_t,\qquad K_t=\ker\pi_t=\{a\in A:t(a^*a)=0\}.
\]
The descended trace on $B_t$ is faithful, and $B_t$ remains separable and nuclear. Consequently UCT for $B_t$ is exactly the remaining hypothesis needed for our application of \cite[Corollary~B]{TWW17}. Quasidiagonality of $A$ alone does not imply quasidiagonality of its quotients, as the lamplighter example demonstrates.

Faithfulness here is a hypothesis of this algebra-level argument. Tikuisis--White--Winter explicitly discuss the quotient-UCT issue in \cite[Remark~5.6]{TWW17}; their Corollary~6.1 also gives quasidiagonality of \emph{all} traces on a separable unital nuclear quasidiagonal UCT algebra. That latter conclusion does not by itself give quasidiagonality of $A/K_t$: matrix approximations to a trace on $A$ need not annihilate $K_t$ in operator norm and descend to the quotient. Passing to the actual GNS image removes the trace-null ideal, while proving UCT there permits a new application of the faithful-trace theorem. The extra work in Section~\ref{sec:uct} is therefore at the level of the chosen images: FC support identifies their kernels as generated restriction ideals, and type~I twisted quotient stages give the required sequential UCT limit. No universal UCT assumption enters Theorem~\ref{thm:main}.

\subsection*{The induction construction}
For ICC groups, Theorem~\ref{thm:main} is Amini--Moradi's Proposition~D \cite{AM23}. We organize the general proof around the hyper-FC radical and induction. The factorial-image argument below extends the representation-theoretic step in their ICC proof. The UCT input for the induced images is established separately; the untwisted ICC theorem is not applied to an arbitrary crossed product.

Section~\ref{sec:direct} gives a second construction of the separating factorial images, using extreme traces of the whole group algebra. It shares the UCT and irreducible-representation arguments of Sections~\ref{sec:uct} and~\ref{sec:replacements} with the induction proof.

All groups below are discrete. Write $u_g$ for the canonical group unitaries. Amenability gives $C^*(G)=C_r^*(G)$ and nuclearity, and countability gives separability. We use the faithful regular trace $\tau_G$ and the canonical faithful subgroup expectations on these reduced algebras. In particular the full subgroup algebras embed isometrically. A separable $C^*$-algebra is strong NF precisely when it is nuclear and inner quasidiagonal \cite[Theorem~4.5]{BK01}.

Put $F=\FC(G)$. Define the upper FC series by
\[
R_0=\{e\},\qquad R_{\alpha+1}/R_\alpha=\FC(G/R_\alpha),
\qquad R_\lambda=\bigcup_{\alpha<\lambda}R_\alpha
\]
at limit ordinals. Its stabilized value is the hyper-FC radical $R=\FCH(G)$; see \cite[Section~2.4]{GU25}. Each term is characteristic. For countable $G$ the series stabilizes at a countable ordinal: if every successor below $\omega_1$ were strict, choosing a new element at each such stage would contradict countability. Equality at one successor makes all later terms equal. Thus
\begin{equation}\label{eq:extension}
1\longrightarrow R\longrightarrow G\overset{p}{\longrightarrow}Q\longrightarrow1,
\qquad \FC(Q)=\{e\}.
\end{equation}
We allow the trivial group in the ICC convention. The quotient in \eqref{eq:extension} is maximal: every surjection from $G$ onto an ICC group kills each $R_\alpha$, by transfinite induction. At a successor, its induced surjection sends finite conjugacy classes to finite conjugacy classes, which must be trivial; at a limit it kills the union. It therefore factors through $Q$. Notice that $F\subseteq R$, without any assertion that $F=R$.

\section{Invariant traces and induced factors}

Write $A=C^*(G)$, $C=C^*(R)$ and
\[
\mathcal T=T(C)^G,
\]
the compact metrizable convex set of tracial states invariant under conjugation by $G$. It contains $\tau_R$. For $\sigma\in\mathcal T$ set
\begin{equation}\label{eq:indtrace}
t_\sigma=\sigma\circ E_R,\qquad
\Pi_\sigma=\pi_{t_\sigma},\qquad B_\sigma=\Pi_\sigma(A),
\end{equation}
where $E_R:A\to C$ is the subgroup expectation. Positivity is immediate. The functional $t_\sigma$ is tracial: for $g,h\in G$, either both $gh,hg$ lie outside $R$, or both lie in $R$ and are $G$-conjugate. Their values agree in either case, and density of the group span proves traciality.

\begin{lemma}\label{lem:induced-factor}
For every $\sigma\in\mathcal T$,
\[
\Pi_\sigma\cong\Ind_R^G(\pi_\sigma).
\]
If $\sigma\in\ext\mathcal T$, then $\Pi_\sigma(A)''$ is a finite factor.
\end{lemma}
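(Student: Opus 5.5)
The plan has two parts. First, identify the GNS representation of $t_\sigma$ concretely as an induced representation. Second, prove factoriality by showing that every central projection of $\Pi_\sigma(A)''$ has vanishing ``off-$R$'' Fourier coefficients, and then using extremality of $\sigma$. Finiteness will be automatic, because $t_\sigma$ is a trace.

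\emph{Induced picture.} Let $(\pi_\sigma,H_\sigma,\xi_\sigma)$ be the GNS triple of $\sigma$ on $C$. Recall that $\Ind_R^G(\pi_\sigma)$ acts on $\bigoplus_{s\in G/R}H_\sigma$, realised as $\ell^2$-sections over a set $S$ of coset representatives. Put $\eta=\xi_\sigma$ in the summand of the trivial coset. For $g\in G$ we have $\langle \Ind(u_g)\eta,\eta\rangle=0$ unless $g\in R$, in which case it equals $\sigma(u_g)$. Hence this vector state is exactly $\sigma\circ E_R=t_\sigma$. The translates $\Ind(u_s)\eta$ generate each summand, since $\xi_\sigma$ is cyclic for $\pi_\sigma(C)$. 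So $\eta$ is cyclic, and uniqueness of GNS gives $\Pi_\sigma\cong\Ind_R^G(\pi_\sigma)$. This step is routine.

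\emph{Finiteness.} Put $M=\Pi_\sigma(A)''$ and let $\xi$ be the GNS vector of $t_\sigma$. Since $t_\sigma$ is tracial, $\omega=\langle\,\cdot\,\xi,\xi\rangle$ is a normal tracial state on $M$. The vector $\xi$ is cyclic, and a cyclic vector for a tracial vector state is also separating. Hence $\omega$ is faithful and $M$ is finite.

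\emph{Fourier decomposition.} Set $N=\Pi_\sigma(C)''\subseteq M$ and let $E:M\to N$ be the $\omega$-preserving normal conditional expectation. Every $x\in M$ has $L^2$-expansion
\[
x\xi=\sum_{s\in S}E(xu_s^*)u_s\xi,
\]
with the summands mutually orthogonal. Here we write $u_s$ for $\Pi_\sigma(u_s)$, and orthogonality holds because $\omega(u_g)=0$ for $g\notin R$. This gives the Parseval identity
\[
\norm{x}_2^2=\sum_{s}\norm{E(xu_s^*)}_2^2 .
\]

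\emph{Main step: central elements lie in $N$.} Let $z\in Z(M)$. For $g\in G$, conjugation by $u_g$ fixes $z$, preserves $N$, commutes with $E$, and preserves $\norm{\cdot}_2$. It follows that
\[
\norm{E(zu_{s}^*)}_2=\norm{E(z u_{gsg^{-1}}^*)}_2,
\]
where the right-hand side is independent of the representative chosen for the coset $gsg^{-1}R$, up to right multiplication by unitaries of $N$.

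Now take $s\notin R$. Then $p(s)\neq e$, and since $\FC(Q)=\{e\}$, the $Q$-conjugacy class of $p(s)$ is infinite. This produces infinitely many distinct cosets carrying the same coefficient norm. Parseval then forces $E(zu_s^*)=0$. Thus $z\xi=E(z)\xi$, and since $\xi$ is separating, $z=E(z)\in N$. I expect the bookkeeping here, namely that the coefficients over conjugate cosets really do have equal norm, to be the main obstacle, though it is standard crossed-product reasoning.

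\emph{Extremality and conclusion.} Take a central projection $z$ with $0<\omega(z)<1$ and define
\[
t_z(a)=\frac{\omega(z\Pi_\sigma(a))}{\omega(z)} .
\]
This is a tracial state on $A$. Its restriction $\sigma_1$ to $C$ is $G$-invariant, because $z$ commutes with each $u_g$. Doing the same with $1-z$ gives $\sigma_2$, and
\[
\sigma=\omega(z)\sigma_1+(1-\omega(z))\sigma_2 .
\]
Extremality of $\sigma$ in $\mathcal T$ gives $\sigma_1=\sigma$.

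Next, for $g\notin R$ we have $\omega(zu_g)=\omega(E(zu_g))=\omega(zE(u_g))=0$, using $z\in N$. Hence $t_z=\sigma\circ E_R=t_\sigma$. Therefore $\omega(zx)=\omega(z)\omega(x)$ for all $x\in M$. Applying this with $x=1-z$ gives $0=\omega(z)(1-\omega(z))$, which contradicts $0<\omega(z)<1$. So $Z(M)$ contains no nontrivial projections, and $M$ is a finite factor.
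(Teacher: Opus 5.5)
Your proposal is correct and takes essentially the paper's route: GNS uniqueness for the induced vector supported on $R$, then the orthogonal coset grading of $L^2(M)$ over $Q$, whose component norms for a central $z$ are constant on $Q$-conjugacy classes and so vanish off the identity by ICC, giving $z=E(z)\in N$, and finally a splitting of $\sigma$ by a central projection that contradicts extremality. The only difference is cosmetic and comes at the end: the paper shows the two pieces of $\sigma$ are distinct by evaluating their normal extensions to $N$ at the projection, whereas you first get $\sigma_1=\sigma$ and then lift to $t_z=t_\sigma$ using $E(u_g)=0$ for $g\notin R$. Your version is equally valid, and it could be shortened by evaluating directly at $1-z\in N$.
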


\begin{proof}
Let $(\pi_\sigma,H_\sigma,\xi_\sigma)$ be the tracial GNS representation of $C$. The induced representation acts by left translation on square-summable functions $f:G\to H_\sigma$ satisfying
\[
f(xr)=\pi_\sigma(r^{-1})f(x)\qquad(r\in R),
\]
with norm squared summed over the cosets $G/R$. The vector $\Omega$ supported on $R$, with $\Omega(r)=\pi_\sigma(r^{-1})\xi_\sigma$, has coefficient $t_\sigma$. It is cyclic: for a section $s:Q\to G$, the vectors obtained by translating $\Omega$ by $s(q)r$ have, in the $q$-coordinate, the dense family $\pi_\sigma(r)\xi_\sigma$. GNS uniqueness proves the claimed equivalence.

Put $M=\Pi_\sigma(A)''$ and $N=\Pi_\sigma(C)''$. The GNS trace extends to a faithful normal trace $T$ on $M$: bounded right multiplication commutes with left multiplication and has the same cyclic vector, making $\Omega$ separating. The restriction of $N$ to $\overline{N\Omega}$ is faithful, since an element annihilating that subspace annihilates $\Omega$. It identifies $N$ with $\pi_\sigma(C)''$, with its faithful normal trace.

Choose $s(e)=e$ and put $V_q=\Pi_\sigma(u_{s(q)})$. The extension need not split:
\[
V_qV_r=w(q,r)V_{qr},\qquad
w(q,r)=\Pi_\sigma(u_{s(q)s(r)s(qr)^{-1}})\in N.
\]
Nevertheless the coset decomposition gives the exact orthogonal sum
\begin{equation}\label{eq:grading}
L^2(M,T)=\bigoplus_{q\in Q}\mathcal H_q,
\qquad \mathcal H_q=\overline{NV_q\Omega}.
\end{equation}
Orthogonality follows from vanishing of $t_\sigma$ off $R$, and density from the group span. Conjugation by $\Pi_\sigma(u_g)$ acts unitarily on $L^2(M,T)$ and sends $\mathcal H_q$ onto $\mathcal H_{p(g)qp(g)^{-1}}$.

For $z\in Z(M)$ the vector $z\Omega$ is fixed by these conjugations. Its component norms in \eqref{eq:grading} are constant on $Q$-conjugacy classes. Their squares are summable, so ICC forces every nonidentity component to vanish. The trace-preserving normal expectation $E_N:M\to N$ is the $L^2$ projection onto $\mathcal H_e$. Therefore
\[
z\Omega=E_N(z)\Omega,\qquad z=E_N(z)\in N.
\]
Commutation with $N$ and all the $V_q$ now gives
\begin{equation}\label{eq:center}
Z(M)=Z(N)^G.
\end{equation}
The action on $Z(N)$ factors through $Q$, since $R$ acts by inner automorphisms. This argument retains the operator-valued cocycle; no untwisting assumption is used.

Suppose that $\sigma$ is extreme in $\mathcal T$ and that $Z(N)^G\ne\C$. A nonscalar self-adjoint invariant central element has a spectral projection $e$ with $0<e<1$. Put $c=T(e)\in(0,1)$. The functionals on $C$ given by
\[
a\longmapsto c^{-1}T(e\Pi_\sigma(a)),\qquad
a\longmapsto(1-c)^{-1}T((1-e)\Pi_\sigma(a))
\]
are invariant tracial states and decompose $\sigma$ with weights $c,1-c$. They are distinct: equality on the weakly dense image of $C$ would give equality of their normal extensions to $N$, contradicted by evaluation at $e$. Extremality excludes this. Thus \eqref{eq:center} is scalar and $M$ is a factor. The proof also applies when $Q=\{e\}$.
\end{proof}

\section{Selecting a separating family}

\begin{proposition}\label{prop:selection}
There are a Borel probability measure $\mu$ on $\mathcal T$, with barycentre $\tau_R$ and carried by $\ext\mathcal T$, and a conull Borel set $X\subseteq\ext\mathcal T$ such that $t_\sigma$ vanishes outside $F$ for every $\sigma\in X$. The induced representations $(\Pi_\sigma)_{\sigma\in X}$ separate $A$.
\end{proposition}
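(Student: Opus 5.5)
The plan is to obtain $\mu$ from Choquet's theorem and then show that the vanishing outside $F$ holds $\mu$-almost everywhere. For the second step I pass from $\mu$ to a $G$-invariant central density in $L(R)$ and use a simple $\ell^2$-decay estimate along infinite conjugacy classes.

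\emph{Step 1: the measure.} Since $C$ is separable, $\mathcal T$ is a compact metrizable convex subset of the dual of $C$. Hence $\ext\mathcal T$ is a $G_\delta$, in particular Borel. Choquet's theorem gives a Borel probability measure $\mu$ carried by $\ext\mathcal T$ with barycentre $\tau_R$, so that $\tau_R(a)=\int\sigma(a)\,d\mu(\sigma)$ for $a\in C$. I do not need uniqueness of $\mu$, so it is irrelevant whether $\mathcal T$ is a simplex.

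\emph{Step 2: densities.} For $f\in L^\infty(\mu)$ with $0\le f\le 1$, the functional $a\mapsto\int f(\sigma)\sigma(a)\,d\mu(\sigma)$ is dominated by $\tau_R$. It is tracial and $G$-invariant. The Radon--Nikodym theorem for functionals dominated by a trace then gives $\Phi(f)$ in the centre of $L(R)=\pi_{\tau_R}(C)''$ with
\[
\int f(\sigma)\sigma(a)\,d\mu(\sigma)=\tau_R(\Phi(f)a).
\]
The element $\Phi(f)$ is central because the functional is tracial, and it is $G$-fixed, by uniqueness of the density, because the functional is invariant. I extend $\Phi$ linearly to all of $L^\infty(\mu)$.

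\emph{Step 3: vanishing off $F$.} Take $r\in R\setminus F$. Its $G$-conjugacy class $\{r_n\}$ is infinite and consists of distinct elements. Using invariance of each $\sigma$, or equivalently of $\Phi(f)$,
\[
\int f\,\sigma(u_r)\,d\mu=\tau_R(\Phi(f)u_{r_n})=\langle u_{r_n}\delta_e,\Phi(f)^*\delta_e\rangle_{\ell^2(R)}\quad\text{for every } n.
\]
The vectors $u_{r_n}\delta_e$ are orthonormal, so by Bessel's inequality the right side tends to $0$ as $n\to\infty$. Since it does not depend on $n$, it vanishes for every $f$. Hence $\sigma(u_r)=0$ for $\mu$-a.e.\ $\sigma$. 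The group $R$ is countable, so I let $X$ be the intersection of $\ext\mathcal T$ with the countably many conull Borel sets $\{\sigma:\sigma(u_r)=0\}$, $r\in R\setminus F$. For $\sigma\in X$ the trace $t_\sigma=\sigma\circ E_R$ then vanishes on $u_g$ for $g\notin R$ (because of $E_R$) and for $g\in R\setminus F$. So it vanishes outside $F$, using $F\subseteq R$.

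\emph{Step 4: separation.} Suppose $\Pi_\sigma(a)=0$ for all $\sigma\in X$. Then $t_\sigma(a^*a)=0$ on $X$. The function $\sigma\mapsto t_\sigma(a^*a)$ is continuous, and integrating it gives
\[
\int t_\sigma(a^*a)\,d\mu=\tau_R(E_R(a^*a))=\tau_G(a^*a),
\]
which is therefore $0$. Faithfulness of $\tau_G$ forces $a=0$.

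I expect Step 2 to be the main obstacle: producing the invariant central density $\Phi(f)$ from an arbitrary Choquet measure. This is what converts the measure-theoretic decomposition into an $\ell^2$ object to which the orthonormality argument of Step 3 applies.
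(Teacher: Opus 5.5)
Your proof is correct. Steps 1 and 4 coincide with the paper's argument, but Step 3 reaches the almost-everywhere vanishing by a different mechanism.

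The paper stays at the $C^*$-level. It averages $m$ distinct conjugates, $b_m=m^{-1}\sum_j u_{r_j}$, and uses invariance to get $\sigma(b_m)=\sigma(u_r)$ for every $\sigma\in\mathcal T$. It then integrates the pointwise Cauchy--Schwarz inequality $|\sigma(b_m)|^2\le\sigma(b_m^*b_m)$ against $\mu$, obtaining $\int|\sigma(u_r)|^2\,d\mu\le\tau_R(b_m^*b_m)=1/m$.

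You instead build a Radon--Nikodym density $\Phi(f)\in Z(L(R))^G$ for each slice $f\,d\mu$, and apply Bessel's inequality to the orthonormal family $\delta_{r_n}$. The underlying fact is the same: orthonormality of the $\delta_{r_j}$ is exactly what makes $\tau_R(b_m^*b_m)=1/m$. The paper's version is more elementary, gives a quantitative bound, and needs no testing against functions $f$. Yours has the conceptual benefit of linking slices of $\mu$ to the invariant centre $Z(L(R))^G$, the kind of object that governs factoriality in Lemma~\ref{lem:induced-factor}.

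Step 3, however, uses neither centrality nor $G$-invariance of $\Phi(f)$, since invariance of each $\sigma$ already replaces $r$ by $r_n$. So the step you flagged as the main obstacle can be bypassed. The inequality $|\phi_f(a)|^2\le\phi_f(a^*a)\phi_f(1)\le\tau_R(a^*a)$ shows that $\phi_f(a)=\int f\,\sigma(a)\,d\mu$ is bounded in the $\ell^2(R)$-norm of $a\delta_e$. The Riesz vector of this functional can then replace $\Phi(f)^*\delta_e$ in your Bessel argument, with no Radon--Nikodym theorem needed.
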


\begin{proof}
Choose a representing measure by the metrizable Choquet theorem \cite[Theorem~4.2]{BD59}. Fix $r\in R\setminus F$. For every $m\geq1$ choose distinct $G$-conjugates $r_1,\ldots,r_m$, all in $R$, and set $b_m=m^{-1}\sum_j u_{r_j}$. Every $\sigma\in\mathcal T$ satisfies $\sigma(b_m)=\sigma(u_r)$. Cauchy--Schwarz and the barycentre identity give
\begin{equation}\label{eq:square-bound}
\int_{\mathcal T}|\sigma(u_r)|^2\,d\mu(\sigma)
\leq\int_{\mathcal T}\sigma(b_m^*b_m)\,d\mu(\sigma)
=\tau_R(b_m^*b_m)=\frac1m.
\end{equation}
Hence $\sigma(u_r)=0$ almost everywhere. The extreme boundary is Borel, each evaluation zero set is closed, and $R$ is countable. Consequently
\[
X=\ext\mathcal T\cap\bigcap_{r\in R\setminus F}
\{\sigma\in\mathcal T:\sigma(u_r)=0\}
\]
is Borel and conull. Extension by zero outside $R$ shows that $t_\sigma$ vanishes outside $F$ on this set.

Since $\tau_R E_R=\tau_G$, for every $a\in A$ we have
\begin{equation}\label{eq:barycentre}
\tau_G(a)=\int_X t_\sigma(a)\,d\mu(\sigma).
\end{equation}
If all $\Pi_\sigma$ annihilate $a$, then \eqref{eq:barycentre} applied to $a^*a$ gives $\tau_G(a^*a)=0$. Faithfulness yields $a=0$. Thus the product homomorphism is injective and isometric, and
\begin{equation}\label{eq:norm}
\norm{a}=\sup_{\sigma\in X}\norm{\Pi_\sigma(a)}.
\end{equation}
\end{proof}

\section{UCT for the selected induced images}\label{sec:uct}

We prove the required $C^*$-level quotient and limit assertions. For a tracial state $t$ on $A$, write
\[
K_t=\ker\pi_t=\{a\in A:t(a^*a)=0\}.
\]
The trace-null space is a closed two-sided $*$-ideal. Besides Cauchy--Schwarz and $t(a^*a)=t(aa^*)$, the relevant multiplication estimates are
\[
t((ba)^*(ba))\leq\norm{b}^2t(a^*a),\qquad
t((ab)^*(ab))=t(abb^*a^*)\leq\norm{b}^2t(aa^*).
\]
This ideal annihilates every GNS vector, and the converse follows at the cyclic vector. The trace descended to $A/K_t$ is therefore faithful.

\begin{lemma}\label{lem:kernel}
Let $L\lhd G$, $J_L=K_t\cap C^*(L)$, $D_L=C^*(L)/J_L$, and let $I_L$ be the closed ideal of $A$ generated by $J_L$. A normalized section of $G/L$ determines a twisted action with
\begin{equation}\label{eq:quotient}
A/I_L\cong D_L\rtimes_{\alpha,w}(G/L).
\end{equation}
The full and reduced products agree. If $t(u_g)=0$ for every $g\notin L$, then $K_t=I_L$.
\end{lemma}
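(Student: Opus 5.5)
We prove the three assertions of Lemma~\ref{lem:kernel} in order: the twisted crossed-product description of $A/I_L$, agreement of full and reduced products, and finally $K_t=I_L$ under the support hypothesis.

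\emph{Step 1: the twisted action.} Choose a normalized section $s:G/L\to G$ with $s(e)=e$, and put
\[
w(x,y)=s(x)s(y)s(xy)^{-1}\in L,\qquad \alpha_x=\Ad u_{s(x)}.
\]
These define a twisted action of $G/L$ on $C^*(L)$. The ideal $J_L$ is invariant under all automorphisms $\Ad u_g$, since $t$ is tracial: $t(u_g a^*a u_g^*)=t(a^*a)$. Hence $\alpha$ and the cocycle $w$ descend to $D_L$, by composing with the quotient map. It is standard that $A\cong C^*(L)\rtimes_{\alpha,w}(G/L)$ via $u_{s(x)l}\mapsto \lambda_x$-type generators; the Packer--Raeburn description is recovered from the universal property of both sides. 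The full twisted crossed product is exact for ideals, so the canonical surjection $C^*(L)\rtimes(G/L)\to D_L\rtimes(G/L)$ has kernel the ideal generated by $J_L$, namely $I_L$. This gives \eqref{eq:quotient}. Full and reduced products agree because $G/L$ is amenable as a quotient of $G$.

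\emph{Step 2: the inclusion $I_L\subseteq K_t$.} This is immediate, since $J_L\subseteq K_t$ and $K_t$ is a closed two-sided ideal.

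\emph{Step 3: the inclusion $K_t\subseteq I_L$.} Let $E:D_L\rtimes(G/L)\to D_L$ be the canonical conditional expectation of the reduced twisted product; by Step 1 it is faithful. The hypothesis $t(u_g)=0$ for $g\notin L$ gives
\[
t=t|_{C^*(L)}\circ E_L,
\]
where $E_L$ is the subgroup expectation; this is checked on group elements and extended by density and continuity. The trace $t|_{C^*(L)}$ vanishes on $J_L$, so it descends to a faithful trace $\bar t$ on $D_L$. Under \eqref{eq:quotient}, the trace $t$ therefore factors through $A/I_L$ as $\bar t\circ E$. Now let $a\in K_t$ and write $\dot a$ for its image in $A/I_L$. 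Then
\[
\bar t\bigl(E(\dot a^*\dot a)\bigr)=0.
\]
Faithfulness of $\bar t$ gives $E(\dot a^*\dot a)=0$, and faithfulness of $E$ on the reduced product gives $\dot a=0$, that is, $a\in I_L$.

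\emph{Main obstacle.} The delicate point is the compatibility in Step 3: one must check that the quotient map intertwines $E_L$ with the expectation $E$ of $D_L\rtimes_{\alpha,w}(G/L)$. This compatibility is where the reduced picture, and hence amenability, is genuinely needed. We would verify it on the spanning elements $u_{l s(x)}$ and then extend by continuity.
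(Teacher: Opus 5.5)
Your proposal is correct and follows the paper's proof in substance. The key step $K_t=I_L$ is the same argument: the descended trace on $A/I_L$ is identified with $\bar t\circ E$ by checking the unitaries $u_{ls(x)}$, and faithfulness of $\bar t$ and of $E$ then forces $\dot a=0$, with amenability entering through full $=$ reduced. The only difference is in \eqref{eq:quotient}, where you cite the standard decomposition $C^*(G)\cong C^*(L)\rtimes_{\alpha,w}(G/L)$ together with exactness of full twisted crossed products for invariant ideals; the paper instead builds the two mutually inverse homomorphisms by hand (a covariant pair in $A/I_L$, and the representation $ls(q)\mapsto v_lU_q$ of $G$), which amounts to proving those two facts in this case.
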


\begin{proof}
The ideal $J_L$ is invariant under conjugation by $G$. Write $v_l=u_l+J_L$, choose a section $s:G/L\to G$ with $s(e)=e$, and set
\[
c(q,r)=s(q)s(r)s(qr)^{-1},\qquad
\alpha_q(v_l)=v_{s(q)ls(q)^{-1}},\qquad w(q,r)=v_{c(q,r)}.
\]
Invariance makes the automorphisms well defined. The identities
\[
\alpha_q\alpha_r=\Ad(w(q,r))\alpha_{qr},\qquad
\alpha_q(w(r,h))w(q,rh)=w(q,r)w(qr,h)
\]
follow by expanding products of the section elements; the second corresponds to
\[
s(q)c(r,h)s(q)^{-1}c(q,rh)=c(q,r)c(qr,h).
\]
Normalization follows from $s(e)=e$.

We have $I_L\subseteq K_t$, whence $I_L\cap C^*(L)=J_L$. Thus $A/I_L$ contains a faithful copy of $D_L$. This copy and the section unitaries give a covariant representation and a surjection from the full twisted product to $A/I_L$. Conversely, for the canonical crossed-product unitaries $U_q$, the assignment
\[
ls(q)\longmapsto v_lU_q
\]
is a group representation, since
\[
(v_lU_q)(v_kU_r)=v_{l s(q)k s(q)^{-1}c(q,r)}U_{qr}.
\]
It integrates to a homomorphism annihilating $J_L$ and hence $I_L$. The two homomorphisms are inverse on generators. This proves \eqref{eq:quotient}. Amenability of $G/L$ identifies the product with its reduced version, with faithful coefficient expectation $E_D$ \cite{PR89}.

If $t$ is supported on $L$, its restriction induces a faithful trace $\bar t_L$ on $D_L$. The natural map $r:A/I_L\to A/K_t$ satisfies
\[
\bar t\circ r=\bar t_L\circ E_D,
\]
as is checked on group unitaries inside and outside $L$, then by density. The right side is faithful: for $x\geq0$, a zero value forces first $E_D(x)=0$ and then $x=0$. Applying this to $y^*y$ for $y\in\ker r$ proves that $r$ is injective, hence $K_t=I_L$.
\end{proof}

In particular, Lemma~\ref{lem:kernel} applied to \eqref{eq:indtrace} gives the actual induced $C^*$-image identity
\begin{equation}\label{eq:radical-image}
B_\sigma\cong(C^*(R)/\ker\pi_\sigma)\rtimes_{\alpha,w}Q.
\end{equation}
We do not impose a type I assumption on the coefficient in \eqref{eq:radical-image}.

\begin{proposition}\label{prop:uct}
If $t$ is a tracial state of $A$ vanishing on $G\setminus F$, then $\pi_t(A)$ satisfies the UCT. In particular, this holds for every $B_\sigma$, $\sigma\in X$.
\end{proposition}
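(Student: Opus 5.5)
Since $t$ vanishes on $G\setminus F$ and $F$ is a normal (indeed characteristic) subgroup, Lemma~\ref{lem:kernel} with $L=F$ gives
\[
\pi_t(A)\cong A/K_t=A/I_F\cong D_F\rtimes_{\alpha,w}(G/F),\qquad D_F=C^*(F)/J_F .
\]
The plan is to reduce the UCT for $\pi_t(A)$ to Eckhardt's Lemma~2.5: twisted crossed products of separable type~I algebras by countable amenable groups satisfy the UCT. The group $G/F$ is countable and amenable. The coefficient $D_F$ is a quotient of $C^*(F)$ with $F$ an FC-group, but $C^*(F)$ need not be type~I, since FC-groups are type~I only when virtually abelian. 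So I will not apply Lemma~2.5 to $D_F$ directly. Instead I will write $D_F\rtimes(G/F)$ as a sequential inductive limit of algebras of the form (type~I)$\rtimes$(amenable), with injective connecting maps.

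\textbf{Step 1: the exhaustion.} Since $F$ is a countable FC-group, I enumerate it and let $F_n$ be the normal closure in $G$ of the first $n$ elements. Each $F_n$ is generated by finitely many finite $G$-conjugacy classes, so it is a finitely generated FC-group. Its centre therefore has finite index, so $F_n$ is virtually abelian. It is also normal in $G$, and $F=\bigcup_n F_n$. The algebra $C^*(F_n)$ is then type~I. Its quotient $D_n=C^*(F_n)/(K_t\cap C^*(F_n))$ is a separable type~I algebra, since type~I passes to quotients.

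\textbf{Step 2: the finite stages.} For each $n$, let $A_n\subseteq A/K_t$ be the $C^*$-subalgebra generated by the image of $C^*(F_n)$ and all $u_g$. This is simply $A/K_t$, which is unhelpful, so I refine the choice. The trace $t$ need not vanish off $F_n$, so Lemma~\ref{lem:kernel} with $L=F_n$ identifies only the intermediate quotient
\[
A/I_{F_n}\cong D_n\rtimes_{\alpha,w}(G/F_n),
\]
and $G/F_n$ is countable amenable. By Eckhardt's Lemma~2.5, each $A/I_{F_n}$ satisfies the UCT. The ideals $I_{F_n}$ increase, because $J_{F_n}\subseteq J_{F_{n+1}}$ under the expectation-compatible inclusions. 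Their union is dense in $I_F=K_t$, because $J_F$ is the closure of $\bigcup_n J_{F_n}$. I will justify this last density with the faithful conditional expectations onto $C^*(F_n)$ and the fact that $\bigcup_n C^*(F_n)$ is dense in $C^*(F)$. Then
\[
A/K_t=\varinjlim_n A/I_{F_n}
\]
is a sequential inductive limit, but with \emph{surjective} connecting maps.

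\textbf{Step 3: passing the UCT to the limit, the main obstacle.} The UCT is not preserved by arbitrary inductive limits with surjective connecting maps. What is preserved is the bootstrap class under limits with injective maps, and under extensions and ideals in suitable settings. My first attempt is a sequential UCT argument using the Milnor $\varprojlim^1$ sequence for $KK(\,\cdot\,,B)$. The required input is that each kernel $I_{F_{n+1}}/I_{F_n}$ satisfies the UCT. This kernel is an ideal in $D_{n+1}\rtimes(G/F_{n+1})$ of the form $J\rtimes(G/F_{n+1})$, where $J$ is an invariant ideal of the type~I algebra $D_{n+1}$. Since $J$ is itself type~I, Lemma~2.5 applies to it too. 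Two-out-of-three for extensions then makes each quotient step stay in the UCT class.

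The remaining point is the limit. I will rewrite $K_t=\overline{\bigcup_n I_{F_n}}$, so that the ideal $K_t\subseteq A$ is an increasing union of UCT ideals. Each $I_{F_n}$ is UCT by two-out-of-three from the extension $0\to I_{F_n}\to A\to A/I_{F_n}\to0$, using Tu's theorem for $A$. An increasing union of ideals is an injective inductive limit, so $K_t$ satisfies the UCT. A final two-out-of-three applied to $0\to K_t\to A\to A/K_t\to 0$ gives the UCT for $\pi_t(A)$. The identification $B_\sigma=\pi_{t_\sigma}(A)$ for $\sigma\in X$ comes from Proposition~\ref{prop:selection}, which yields the last sentence of the statement. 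The delicate checks are the density $\overline{\bigcup_n I_{F_n}}=K_t$ and the correct use of two-out-of-three for nuclear extensions.
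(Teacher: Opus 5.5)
Your proof is correct, and up to the last step it is the paper's proof. The exhaustion by the normal closures $F_n$, the stages $A/I_{F_n}\cong D_n\rtimes_{\alpha,w}(G/F_n)$ with Eckhardt's Lemma~2.5, and the identification $K_t=I_F=\overline{\bigcup_n I_{F_n}}$ are exactly what the paper does.

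The two routes differ only in how the UCT reaches the limit. The paper applies closure of the separable nuclear UCT (bootstrap) class under sequential inductive limits directly to $A/K_t\cong\varinjlim_n A/I_{F_n}$. Contrary to your remark, surjective connecting maps are no obstruction: that class is closed under countable inductive limits with arbitrary connecting maps, which is Blackadar's axiom (N2) as the paper cites it.

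You instead pass to kernels:
\begin{itemize}
\item Tu's theorem for $C^*(G)$ and two-out-of-three make each $I_{F_n}$ UCT.
\item Closure under increasing unions of ideals then gives the UCT for $K_t$.
\item Two-out-of-three applied once more gives it for $A/K_t$.
\end{itemize}
This is valid and needs only injective limits and the extension axiom. The price is importing the UCT for $C^*(G)$ itself, which the paper's argument does not use at this step.

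Your Milnor-sequence first attempt can simply be dropped. As written it also misplaces the kernel: $I_{F_{n+1}}/I_{F_n}$ is an ideal of $A/I_{F_n}$, not of $D_{n+1}\rtimes(G/F_{n+1})$.

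One correction concerns the density $J_F=\overline{\bigcup_n J_{F_n}}$. The conditional expectations are the wrong tool, because they need not preserve trace-null ideals. For example, take $G=F=\mathbb Z$, $F_1=2\mathbb Z$ and the trivial-character trace. Then $u_0-u_1\in K_t$, but $E_{2\mathbb Z}(u_0-u_1)=1\notin K_t$. Use instead, as the paper does, that $C^*(F_n)/J_{F_n}\to C^*(F)/J_F$ is injective and hence isometric. This gives $\dist(b,J_{F_n})=\norm{b+J_F}$ for $b\in C^*(F_n)$, and you then approximate $a\in J_F$ by such $b$.
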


\begin{proof}
Enumerate $F$ as $f_1,f_2,\ldots$, allowing repetitions when it is finite. Let $F_n$ be generated by the union of the $G$-conjugacy classes of $f_1,\ldots,f_n$. This is a finite generating set invariant under $G$, so $F_n\lhd G$, and the $F_n$ increase to $F$. Each generating element has finite conjugacy class in $F_n$. Intersecting its finite-index centralizers over the finite generating set shows
\[
[F_n:Z(F_n)]<\infty.
\]
In an irreducible representation of $F_n$, all central unitaries are scalar. If $m_n=[F_n:Z(F_n)]$, the image algebra is consequently spanned by at most $m_n$ central coset representatives. It is finite dimensional; irreducibility makes it a full matrix algebra. The resulting uniform bound at each fixed $n$ proves that $C^*(F_n)$ is subhomogeneous and hence type I.

Set $J=K_t\cap C^*(F)$, $J_n=J\cap C^*(F_n)$, and let $I,I_n$ be the respective closed generated ideals in $A$. By Lemma~\ref{lem:kernel}, $I=K_t$ and
\begin{equation}\label{eq:stages}
A/I_n\cong(C^*(F_n)/J_n)\rtimes_{\alpha^{(n)},w^{(n)}}(G/F_n).
\end{equation}
Each coefficient is separable and type I, and each acting group is countable amenable. Eckhardt's type I coefficient theorem \cite[Lemma~2.5]{Eck24} proves UCT for every algebra in \eqref{eq:stages}. These algebras are also separable and nuclear, being quotients of $A$.

The following ideal equalities identify the limit:
\begin{equation}\label{eq:ideal-limit}
J=\overline{\bigcup_n J_n},\qquad
I=\overline{\bigcup_n I_n}.
\end{equation}
For the first, take $a\in J$ and $\delta>0$, and choose $b\in C^*(F_n)$ with $\norm{a-b}<\delta$. The induced map $C^*(F_n)/J_n\to C^*(F)/J$ is injective and isometric, so
\[
\dist(b,J_n)=\norm{b+J}<\delta.
\]
Choose $c\in J_n$ with $\norm{b-c}<2\delta$, giving $\norm{a-c}<3\delta$. For the second equality, the increasing union closure of the ideals $I_n$ is an ideal containing all $J_n$, hence $J$ and $I$; the reverse inclusion is immediate.

Thus
\begin{equation}\label{eq:uct-limit}
\pi_t(A)\cong A/I\cong\varinjlim_n A/I_n,
\end{equation}
where the connecting maps are the canonical surjections. Explicitly, for every $a\in A$ and every tail of the system,
\[
\lim_{n\to\infty}\norm{a+I_n}
=\lim_{n\to\infty}\dist(a,I_n)
=\dist(a,I)=\norm{a+I}.
\]
This proves the inductive-limit identification, including its norm. The separable nuclear UCT class is closed under sequential limits with possibly noninjective connecting maps \cite[Definition~22.3.4(N2), Theorem~23.1.1 and Section~3.3]{Blackadar98}. Applying this to \eqref{eq:uct-limit} proves the claim. No UCT permanence under arbitrary quotients is used.
\end{proof}

\section{Irreducible replacements and inner quasidiagonality}\label{sec:replacements}

\begin{proof}[Proof of Theorem~\ref{thm:main}]
For $\sigma\in X$, the algebra $B_\sigma$ is separable, nuclear, UCT, and has a faithful trace. It is quasidiagonal by Tikuisis--White--Winter \cite[Corollary~B]{TWW17}. Lemma~\ref{lem:induced-factor} gives a faithful factorial representation of $B_\sigma$, so $B_\sigma$ is prime. Indeed, the support projection of any nonzero ideal belongs to the center of this factor and is nonzero, hence is $1$; its approximate identity converges strongly to $1$. Two nonzero ideals cannot therefore have zero product.

By separability and Dixmier's theorem \cite[Corollaire~1]{Dix60}, choose a faithful irreducible representation $\rho_\sigma:B_\sigma\to\B(H_\sigma')$. Its Hilbert space is separable because the representation is cyclic. If it is finite dimensional, it is quasidiagonal. Otherwise its image meets $\K(H_\sigma')$ only in zero. To see this, a nonzero compact in an irreducible image forces containment of all compacts \cite[Corollary~IV.1.2.5]{Blackadar06}. The faithful trace transported to that image would assign the same positive value $c$ to every rank-one projection, since they are equivalent within the compact algebra. Arbitrarily large orthogonal families would give $mc\leq1$ for every $m$, a contradiction.

The faithful essential representation theorem of Voiculescu now makes $\rho_\sigma$ quasidiagonal \cite{Voi76}; see also \cite[proof of Proposition~2.5]{BK01}. Thus
\[
\theta_\sigma=\rho_\sigma\circ\Pi_\sigma
\]
is an irreducible quasidiagonal representation of $A$. Faithfulness of $\rho_\sigma$ preserves the kernel of $\Pi_\sigma$, so the family separates and retains \eqref{eq:norm} with $\theta_\sigma$ in place of $\Pi_\sigma$.

Let $E\subseteq A$ be a nonempty finite set and $\varepsilon>0$. Choose finitely many of these irreducibles detecting each $\norm{a}$, $a\in E$, within $\varepsilon/3$. Remove unitary-equivalence duplicates and call the resulting representations $\theta_1,\ldots,\theta_m$. Their norms still detect every required element. Schur's lemma gives
\[
\theta(A)''=\bigoplus_{j=1}^m\B(H_j),\qquad
\theta=\bigoplus_{j=1}^m\theta_j.
\]
Choose a finite-rank quasidiagonal projection $p_j$ in each summand such that, for every $a\in E$,
\[
\norm{[p_j,\theta_j(a)]}<\varepsilon,
\qquad
\norm{p_j\theta_j(a)p_j}>\norm{\theta_j(a)}-\varepsilon/3.
\]
The compression condition follows from strong convergence to $1$, applied to a unit vector nearly attaining the operator norm and its image. In finite dimension take $p_j=1$. The finite-rank projection $p=\bigoplus_j p_j$ lies in $\theta(A)''$ and satisfies
\[
\norm{[p,\theta(a)]}<\varepsilon,
\qquad
\norm{p\theta(a)p}>\norm{a}-2\varepsilon/3>\norm{a}-\varepsilon
\quad(a\in E).
\]
The empty set has no conditions and permits $p=0$. These are precisely the inner quasidiagonality inequalities \cite[Definition~2.2]{BK01}. Nuclearity now proves strong NF.
\end{proof}

\begin{remark}
The roles of the two radicals are distinct. The hyper-FC radical supplies the ICC quotient and the induced factors; the ordinary FC-centre supplies the type I stages after the conull selection. The proof uses no finite-orbit hypothesis on irreducible representations of the radical. The quasidiagonal irreducible replacements are representations of the induced $C^*$-images, and are not asserted to be subrepresentations of the induced factors. Finite and trivial groups, finite FC-centres, and the case $R=G$ are covered by the same construction.
\end{remark}

\section{Direct trace decomposition and comparison}\label{sec:direct}

The separating factorial images can also be obtained by decomposing the regular trace of the whole group algebra. We give the construction explicitly so that its relationship with the induction proof is precise.

\begin{proposition}\label{prop:direct}
There is a Borel probability measure $\nu$ on $T(A)$, with barycentre $\tau_G$ and carried by $\ext T(A)$, and a conull Borel set $Y\subseteq\ext T(A)$ such that every $t\in Y$ vanishes on $G\setminus F$. The representations $(\pi_t)_{t\in Y}$ separate $A$, and each $\pi_t(A)''$ is a finite factor.
\end{proposition}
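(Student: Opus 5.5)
The plan follows the proof of Proposition~\ref{prop:selection}, now applied to the Choquet simplex $T(A)$ of tracial states on $A$ rather than to $\mathcal T$. Since $A$ is separable, $T(A)$ is a compact metrizable convex set containing $\tau_G$. The metrizable Choquet theorem \cite[Theorem~4.2]{BD59} then gives a Borel probability measure $\nu$ with barycentre $\tau_G$ carried by $\ext T(A)$, and $\ext T(A)$ is Borel.

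For the support condition, fix $g\in G\setminus F$. Its conjugacy class is infinite, so for each $m$ choose distinct conjugates $g_1,\dots,g_m$ and put $b_m=m^{-1}\sum_j u_{g_j}$. Every trace satisfies $t(b_m)=t(u_g)$, because conjugate unitaries have equal trace. Cauchy--Schwarz together with the barycentre identity give
\[
\int|t(u_g)|^2\,d\nu(t)\le\int t(b_m^*b_m)\,d\nu(t)=\tau_G(b_m^*b_m)=\tfrac1m ,
\]
using $\tau_G(u_{g_i}^*u_{g_j})=\delta_{ij}$. Hence $t(u_g)=0$ for $\nu$-almost every $t$. Since $G$ is countable, we take
\[
Y=\ext T(A)\cap\bigcap_{g\notin F}\{t:t(u_g)=0\},
\]
which is a Borel, conull set. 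Separation follows exactly as in \eqref{eq:barycentre}. If $\pi_t(a)=0$ for all $t\in Y$, then $t(a^*a)=0$ on $Y$, so integrating gives $\tau_G(a^*a)=0$, and faithfulness of $\tau_G$ forces $a=0$.

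The remaining point, and the one needing real care, is that $\pi_t(A)''$ is a finite factor whenever $t$ is an extreme trace. This is the central-projection argument cited in Remark~\ref{rem:uct-prediction}. Let $M=\pi_t(A)''$ with cyclic vector $\xi_t$. First, the vector state $T=\langle\,\cdot\,\xi_t,\xi_t\rangle$ is a normal trace on $M$: it is tracial on $\pi_t(A)$, and normality extends this to $M$. It is faithful because $\xi_t$ is separating: the right multiplications $\pi_t(a)\xi_t\mapsto\pi_t(a)\pi_t(b)\xi_t$ are bounded by the tracial estimate $t((ab)^*(ab))\le\|b\|^2t(a^*a)$ and commute with $M$, so $\xi_t$ is cyclic for $M'$. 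Thus $M$ is finite. Next, suppose $Z(M)\neq\C$, and pick a central projection $e$ with $0<e<1$. Faithfulness gives $c=T(e)\in(0,1)$. The two states $c^{-1}T(e\pi_t(\cdot))$ and $(1-c)^{-1}T((1-e)\pi_t(\cdot))$ are tracial, since $e$ is central, and $t$ is their convex combination with weights $c$ and $1-c$. They are distinct: their normal extensions differ at $e$, and $\pi_t(A)$ is weakly dense in $M$. This contradicts extremality, so $M$ is a factor.

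Neither this argument nor the FC-support step uses anything beyond extremality and the conjugacy-class averaging, so no input from Lemma~\ref{lem:induced-factor} is needed.
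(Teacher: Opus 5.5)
Your proof is correct and follows the paper's argument step for step: the Choquet decomposition of $\tau_G$ in $T(A)$, the conjugacy-averaging estimate giving FC support on a conull Borel set $Y$, separation via faithfulness of $\tau_G$, and the central-projection argument for factoriality. Your only addition is spelling out why the GNS vector is separating (via bounded right multiplications). The paper asserts this here without proof and proves it the same way in Lemma~\ref{lem:induced-factor}.
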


\begin{proof}
Apply the metrizable Choquet theorem to $\tau_G\in T(A)$. For $g\in G\setminus F$ and any $m\geq1$, choose distinct conjugates $g_1,\ldots,g_m$ of $g$ and set $b_m=m^{-1}\sum_j u_{g_j}$. Traciality gives $t(b_m)=t(u_g)$ for every $t\in T(A)$. Thus
\[
\int_{T(A)}|t(u_g)|^2\,d\nu(t)
\leq\int_{T(A)}t(b_m^*b_m)\,d\nu(t)
=\tau_G(b_m^*b_m)=\frac1m.
\]
It follows that $t(u_g)=0$ almost everywhere. Countability of $G$ and Borelness of the extreme boundary show that
\[
Y=\ext T(A)\cap\bigcap_{g\in G\setminus F}
\{t\in T(A):t(u_g)=0\}
\]
is Borel and conull. If $\pi_t(a)=0$ for every $t\in Y$, then
\[
\tau_G(a^*a)=\int_Y t(a^*a)\,d\nu(t)=0,
\]
so faithfulness of $\tau_G$ gives $a=0$. The corresponding product representation is isometric, and hence
\begin{equation}\label{eq:direct-norm}
\norm{a}=\sup_{t\in Y}\norm{\pi_t(a)}\qquad(a\in A).
\end{equation}

For factoriality, fix $t\in\ext T(A)$ and write $M_t=\pi_t(A)''$, with its faithful normal GNS trace $T_t$. If $Z(M_t)\ne\C$, a central spectral projection $e$ satisfies $0<e<1$. Let $c=T_t(e)\in(0,1)$. The two states
\[
a\longmapsto c^{-1}T_t(e\pi_t(a)),\qquad
a\longmapsto(1-c)^{-1}T_t((1-e)\pi_t(a))
\]
are tracial and decompose $t$ with weights $c,1-c$. They are distinct: equality on $\pi_t(A)$ would extend by normality and weak density to $M_t$, where their values at $e$ are $1$ and $0$. This contradicts extremality. Thus $M_t$ is a finite factor, in particular for every $t\in Y$.
\end{proof}

For $t\in Y$, put $B_t=\pi_t(A)$. Proposition~\ref{prop:uct} gives the UCT for $B_t$, and its descended trace is faithful. It is separable and nuclear as a quotient of $A$, so Tikuisis--White--Winter makes it quasidiagonal. Proposition~\ref{prop:direct} supplies its faithful factorial representation. The prime-to-primitive argument in Section~\ref{sec:replacements} therefore gives a faithful irreducible representation of $B_t$; the faithful trace excludes compact operators in the infinite-dimensional case, and Voiculescu's theorem makes that representation quasidiagonal. Composition with $\pi_t$ preserves its kernel and the norm identity \eqref{eq:direct-norm}. Applying the finite norm-witness and bicommutant-projection construction of Section~\ref{sec:replacements} to this family proves inner quasidiagonality and hence Theorem~\ref{thm:main} again.

The difference is in the space being decomposed and the source of factoriality. The induction proof decomposes $\tau_R$ in $T(C^*(R))^G$ and uses the ICC quotient to prove factoriality after induction. The direct proof decomposes $\tau_G$ in $T(A)$, where extremality itself gives factoriality. Both use the same conjugacy-averaging estimate to obtain FC support, the same type I twisted quotient stages and UCT limit, and the same faithful irreducible replacements and finite-rank projections. The direct construction shortens the selection and factoriality argument; the induction construction exhibits how the maximal ICC quotient produces the factors. This comparison concerns the two constructions given here and makes no identification of their representing measures or selected families.

\section*{Acknowledgements}
A proof of Theorem~\ref{thm:main} was sketched by the author in his talk at COSy 2024 at the University of Alberta. OpenAI's ChatGPT/Codex helped check references and prepare the exposition and \LaTeX{} source.
The author gratefully acknowledges support from the Fields Institute and the University of Ottawa during his postdoctoral fellowship from July 2023 to June of 2025.
He thanks Chris Schafhauser for discussions regarding this work during the operator algebra thematic program.

\end{document}